\newtheorem{theorem}{Theorem}
\newtheorem*{theorem*}{Theorem}
\newtheorem{lemma}[theorem]{Lemma}
\newtheorem{conjecture}[theorem]{Conjecture}
\newtheorem{remark}[theorem]{Remark}
\DeclareMathOperator{\Con}{Con}
\DeclareMathOperator{\Aff}{Aff}
\begin{document}

\title{Hardness of almost embedding simplicial complexes in $\mathbb{R}^d$, II
\footnote{I would like to thank S. Avvakumov for helpful discussion.}
}

\author{Emil Alkin
%\footnote{4th year student of Moscow Institute of Physics and Technology.}
}

\date{}

\maketitle

\begin{abstract}
A map $f: K \to \mathbb{R}^d$ of a simplicial complex is an \textit{almost embedding} if $f(\sigma) \cap f(\tau) = \varnothing$ whenever $\sigma, \tau$ are disjoint simplices of $K$.
Fix integers $d,k \geqslant 2$ such that $k+2 \leqslant d \leqslant\frac{3k}2+1$.
Assuming that the ``preimage of a cycle is a cycle'' (Conjecture \ref{con:pre})
we prove $\mathbf{NP}$-hardness of the algorithmic problem of recognition of almost embeddability of finite $k$-dimensional complexes in $\mathbb{R}^d$.
Assuming that $\mathbf{P} \ne \mathbf{NP}$ (and that the ``preimage of a cycle is a cycle'') we prove that the embedding obstruction is incomplete for $k$-dimensional complexes in $\mathbb{R}^d$ using configuration spaces.
Our proof generalizes the Skopenkov-Tancer proof of this result for $d = \frac{3k}{2} + 1$.  
%Our generalization involves generalized Singular Borromean Rings Lemma obtained using a folklore result stating that the ``preimage of a cycle is a cycle''.
\end{abstract}

\tableofcontents

\section{Introduction}
Let $K$ be a finite simplicial complex.
A map $f : K \to \mathbb{R}^d$ is an \textit{almost embedding} if $f(\sigma) \cap f(\tau) = \varnothing$ whenever $\sigma, \tau$ are disjoint simplices of $K$. 
%(Existence of an almost embedding is obviously a necessary condition for existence of an embedding.)

Almost embeddings naturally appear in studies of embeddings. See more motivations in \cite[\S1, `Motivation and background' part]{ST} and \cite[Remark 5.7.4]{Algor}.

\begin{theorem}\label{t:nphard}
Assume that Conjecture~\ref{con:pre} is true.
The algorithmic problem of recognition of almost embeddability of finite $k$-dimensional complexes in $\mathbb{R}^d$ is $\mathbf{NP}$-hard for $d,k \geqslant 2$ such that $k+2 \leqslant d \leqslant\frac{3k}2+1$.
\end{theorem}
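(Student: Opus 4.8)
The plan is a polynomial-time many-one reduction from an $\mathbf{NP}$-hard problem, generalizing the Skopenkov--Tancer reduction from the single ambient dimension $d=\frac{3k}{2}+1$ to the whole range $k+2\leqslant d\leqslant\frac{3k}{2}+1$. For definiteness I would reduce from a restricted form of $3$-$\mathrm{SAT}$, as Skopenkov and Tancer do. Given an instance $\Phi$ with variables $x_1,\dots,x_n$ and clauses $C_1,\dots,C_m$, I would build a finite $k$-dimensional complex $K_\Phi$ of size polynomial in $|\Phi|$, assembled from a fixed ``ambient'' subcomplex, a \emph{variable gadget} for each $x_i$, a \emph{clause gadget} for each $C_j$, and \emph{wire} subcomplexes joining them according to which literals occur in which clauses. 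Each gadget is built from joins of spheres and simplices whose dimensions are tuned to the codimension $d-k\in\{2,3,\dots,\lfloor k/2\rfloor+1\}$, so that the only way $K_\Phi$ can almost embed in $\mathbb{R}^d$ is to realise each variable gadget in one of two ``states'', to be read off as the truth value of $x_i$, consistently with the constraints transmitted through the wires by the clause gadgets. The numerology rests on two inequalities: $d\geqslant k+2$ is equivalent to $2k\leqslant 2(d-1)-1$, which puts us in the \emph{stable range} for maps from the $2k$-dimensional deleted product of $K_\Phi$ (with its factor-swapping involution) to the antipodal sphere $S^{d-1}$, so that the obstruction-theoretic analysis of the gadgets is finite and uniform; and $d\leqslant\frac{3k}{2}+1$ keeps us strictly below the Haefliger--Weber metastable range $2d\geqslant 3k+3$, above which the configuration-space (deleted-product) obstruction is complete, so $\frac{3k}{2}+1$ is the natural upper threshold for a hardness result of this kind.

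The ``yes'' direction is a direct geometric construction: from a satisfying assignment of $\Phi$, put the ambient subcomplex in standard position, realise each variable gadget in the prescribed state, and route the wires and clause gadgets in general position; the join structure of the gadgets reduces this to a routine sequence of general-position moves, and it does not use Conjecture~\ref{con:pre}.

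The ``no'' direction is the heart of the matter and is where Conjecture~\ref{con:pre} is used: one must show that an arbitrary almost embedding $f\colon K_\Phi\to\mathbb{R}^d$ gives rise to a satisfying assignment of $\Phi$. From $f$ one extracts, for each variable $x_i$, a linking-type integer --- or its reduction modulo $2$, when the dimensions force torsion coefficients --- essentially the linking number of a pair of disjoint cycles of complementary dimensions $p+q=d-1$ built into the variable gadget; and for each clause one verifies an identity among these integers forced by a chain supplied by the clause gadget. Reading off and comparing such numbers amounts to taking $f$-preimages of suitable generic cycles in $\mathbb{R}^d$, and Conjecture~\ref{con:pre} (``the preimage of a cycle is a cycle'') is exactly what guarantees that these preimages behave homologically like transverse preimages, which is not automatic because $K_\Phi$ is not a manifold and $f$ is a priori only continuous. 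With that in hand, the induced $\{0,1\}$-assignment satisfies every clause, and one checks finally that $|K_\Phi|$ and the time to build it are polynomial in $|\Phi|$.

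I expect the gadget construction and its verification to be the main obstacle. One needs a single family of combinatorial gadgets that simultaneously admits the intended partial almost embedding in each allowed state and \emph{obstructs} almost-embeddability in each forbidden state, uniformly for every $d$ with $k+2\leqslant d\leqslant\frac{3k}{2}+1$, not just for the single value $\frac{3k}{2}+1$ handled by Skopenkov--Tancer. The obvious attempts to reduce the range to that value --- joining $K_\Phi$ with a sphere, or suspending --- do not relate the two ambient dimensions as required, so I would instead redo the Skopenkov--Tancer obstruction estimates in each codimension $d-k$, tracking which stable stem $\pi^s_j$ (for $0\leqslant j\leqslant 2k-d$) controls each obstruction, the case $j=0$ being the van Kampen obstruction in integral cohomology. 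Establishing the obstruction property in every forbidden state is the delicate point, because below the metastable range the non-triviality of the configuration-space obstruction must be verified by hand, and it is again Conjecture~\ref{con:pre} that upgrades such non-triviality into a genuine obstruction to the existence of an almost embedding.
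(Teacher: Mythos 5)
Your overall frame (a polynomial reduction from 3-SAT via the Skopenkov--Tancer gadget complex $K(\Phi)$, with Conjecture~\ref{con:pre} entering only in the ``no'' direction) matches the paper, but the step you yourself flag as ``the main obstacle'' is exactly the paper's new content, and your proposed substitute for it would not work. The paper does not redo any equivariant obstruction estimates, track stable stems $\pi^s_j$, or invoke the van Kampen obstruction: it keeps the ST reduction verbatim except that it sets $l:=d-k-1$ and replaces the torus in each gadget by the $2l$-dimensional torus $T=S^l\times S^l$ (the hypothesis $d\leqslant\frac{3k}{2}+1$ is used only to get $k\geqslant 2l$, so $K(\Phi)$ stays $k$-dimensional, and $d\geqslant k+2$ only to get $l\geqslant 1$; neither inequality plays the ``stable/metastable range'' role you assign to it). The ``no'' direction is then finished by the generalized Singular Borromean Rings Lemma~\ref{l:SBR} for $T\sqcup S^k_p\sqcup S^k_m$ in $\mathbb{R}^{k+l+1}$, whose proof is a direct geometric linking argument: take cones $C_T,C_p,C_m$ over the images, apply the Leibniz formula to $\partial(C_T\cap C_p\cap C_m)$, and count intersections mod 2. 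Conjecture~\ref{con:pre} is used precisely there, to know that preimages such as $f_T^{-1}C_p$ and $f_{S^k_p}^{-1}C_T$ are cycles so that the parity and homology-class arguments apply; it does not ``upgrade non-triviality of the configuration-space obstruction into a genuine obstruction,'' and indeed it cannot play that role, since Theorem~\ref{t:eqmap} says the configuration-space obstruction is incomplete in exactly this range of $d,k$ --- which is also why an obstruction-theoretic engine for the ``no'' direction, as you propose, is structurally the wrong tool here.

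So the concrete gap is this: you never state or prove the statement that actually powers the reduction in every codimension, namely that for $k>l\geqslant 1$ there is no map of $S^l\times S^l\sqcup S^k\sqcup S^k$ into $\mathbb{R}^{k+l+1}$ with pairwise disjoint images and the prescribed mod-2 linking pattern (Lemma~\ref{l:SBR}); without it, your linking-number bookkeeping for variable and clause gadgets has no lemma to appeal to when the assignment read off from an almost embedding is claimed to satisfy $\Phi$. The fix is not a new family of gadgets or a stem-by-stem obstruction analysis, but the observation that the ST gadgets work unchanged once their torus is shrunk to dimension $2(d-k-1)$, provided one proves the Borromean-rings lemma in that generality --- which is exactly where the ``preimage of a cycle is a cycle'' conjecture is needed, because the AMSW version of the lemma only covers the dimensions arising when $d=\frac{3k}{2}+1$ (or $d\equiv 1 \pmod 3$).
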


The (simplicial) \textit{deleted product} of $K$ is
$$\widetilde{K} := \cup \{\sigma \times \tau : \sigma, \tau \text{ are simplices of } K, \sigma \cap \tau = \varnothing \};$$
i.e., $\widetilde{K}$ is the union of products $\sigma \times \tau$ formed by disjoint simplices of $K$.

A map $\widetilde{f} : \widetilde{K} \to S^{d-1}$ is \textit{equivariant} if $\widetilde{f}(y, x) = -\widetilde{f}(x, y)$ for each pair $(x, y)$ from $\widetilde{K}$.
%with respect to the ‘exchanging factors’ involution $(x, y) \to (y, x)$ on $\widetilde{K}$ and the antipodal involution on $S^{d-1}$ 

\begin{theorem}\label{t:eqmap}
Fix integers $d,k \geqslant 2$ such that $k+2 \leqslant d \leqslant\frac{3k}2+1$.
Assume that $\mathbf{P} \ne \mathbf{NP}$. Assume that Conjecture~\ref{con:pre} is true. Then there exists a finite $k$-dimensional complex $K$ that does not admit an almost embedding in $\mathbb{R}^d$ but for which there exists an equivariant map $\widetilde K\to S^{d-1}$.
\end{theorem}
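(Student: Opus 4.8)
The plan is to deduce Theorem~\ref{t:eqmap} from Theorem~\ref{t:nphard} by the standard ``complexity beats the obstruction'' scheme, exactly as Skopenkov--Tancer do for $d=\tfrac{3k}2+1$. The existence of an equivariant map $\widetilde K\to S^{d-1}$ is, for $d\geqslant k+2$, an \emph{algorithmically easy} necessary condition for almost embeddability of a $k$-complex in $\mathbb R^d$; so it cannot be \emph{sufficient} for all $K$ unless $\mathbf P=\mathbf{NP}$, and the desired $K$ is a witness of insufficiency.

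First I would record the trivial, always valid implication: if $f\colon K\to\mathbb R^d$ is an almost embedding, then $(x,y)\mapsto (f(x)-f(y))/|f(x)-f(y)|$ is a well-defined equivariant map $\widetilde K\to S^{d-1}$ — well-definedness holds precisely because $\widetilde K$ consists of products $\sigma\times\tau$ with $\sigma\cap\tau=\varnothing$ and $f$ is an almost embedding, so $f(x)\ne f(y)$ there; equivariance is clear since $\widetilde K$ is closed under swapping coordinates. Hence it suffices to \emph{refute} the statement ``for every finite $k$-complex $K$, $K$ admits an almost embedding in $\mathbb R^d$ if and only if $\widetilde K$ admits an equivariant map to $S^{d-1}$.''

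So I would assume this statement for contradiction. Given $K$, the deleted product $\widetilde K$ is computable from $K$ in polynomial time, and it is a free $\mathbb Z_2$-complex of dimension at most $2k$. Since $d\geqslant k+2$ we have $2k\leqslant 2(d-1)-2$, so when one builds an equivariant map $\widetilde K\to S^{d-1}$ cell by cell, every possibly nonzero obstruction lies in $\mathbb Z_2$-equivariant cohomology of $\widetilde K$ with coefficients in a homotopy group $\pi_i(S^{d-1})$ with $d-1\leqslant i\leqslant 2k-1$, i.e.\ in the stable range; there the relevant (equivariant) Postnikov data of $S^{d-1}$ is finite and effectively computable, and by the polynomial-time algorithms for the equivariant map problem in this range (the techniques used in \cite{ST} and the references therein) one decides in time polynomial in $|K|$ whether an equivariant map $\widetilde K\to S^{d-1}$ exists. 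Under the assumed equivalence this would decide almost embeddability of finite $k$-complexes in $\mathbb R^d$ in polynomial time, contradicting its $\mathbf{NP}$-hardness (Theorem~\ref{t:nphard}, which uses Conjecture~\ref{con:pre}) together with $\mathbf P\ne\mathbf{NP}$. Thus the equivalence fails, and a witnessing $K$ — no almost embedding in $\mathbb R^d$, yet an equivariant map $\widetilde K\to S^{d-1}$ — is exactly the complex required by Theorem~\ref{t:eqmap}.

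The logical skeleton above is routine; the single point needing care is the polynomial-time decidability of the equivariant-map problem, and here the hypothesis $d\geqslant k+2$ — which is precisely what keeps $\dim\widetilde K$ in the stable range relative to $S^{d-1}$ — is essential, whereas $d\leqslant\tfrac{3k}2+1$ enters only through Theorem~\ref{t:nphard}. I expect the main obstacle, relative to the endpoint case $d=\tfrac{3k}2+1$ treated by Skopenkov--Tancer, to be verifying that both ingredients — the obstruction-theoretic algorithm and the reduction underlying Theorem~\ref{t:nphard} — go through uniformly for every $d$ in the interval $k+2\leqslant d\leqslant\tfrac{3k}2+1$, and in particular that nothing degenerates when $d$ is strictly below $\tfrac{3k}2+1$.
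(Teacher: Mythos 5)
Your proposal is correct and follows essentially the same route as the paper: deduce Theorem~\ref{t:eqmap} from Theorem~\ref{t:nphard} by noting that almost embeddability implies the existence of an equivariant map $\widetilde K\to S^{d-1}$, that this existence is polynomial-time decidable for fixed $d,k$, and that otherwise almost embeddability would be decidable in polynomial time, contradicting $\mathbf{NP}$-hardness and $\mathbf P\ne\mathbf{NP}$. The only difference is cosmetic: the paper simply cites \cite{CKV} for the polynomial-time algorithm, whereas you sketch a stable-range obstruction-theoretic justification and defer to the references in \cite{ST}, which amounts to the same external input.
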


%Arkadiy Skopenkov and Martin Tancer have proved in  
The  particular cases of both Theorem~\ref{t:nphard} and Theorem~\ref{t:eqmap} (without assuming Conjecture~\ref{con:pre}) for $k, d$ such that $d = \frac{3k}{2} + 1$ or $d \equiv 1\ (\text{mod}\ 3)$ and $2 \leqslant k \leqslant d \leqslant \frac{3k}{2} + 1$ are proved in \cite{ST}. %(the authors also noticed that the conclusion of Theorem \ref{t:nphard} are valid for fixed integers $k, d$ such that $2 \leqslant k \leqslant d \leqslant \frac{3k}{2} + 1$ and $d \equiv 1\ (\text{mod} 3)$). 
Those proofs are based on \cite[Singular Borromean Rings Lemma 2.4]{AMSW}. %According to  A. Skopenkov

Theorem~\ref{t:nphard} is deduced analogously (see details below) to \cite[Theorem 1(b)]{ST} from the following generalized Singular Borromean Rings Lemma~\ref{l:SBR}. %(I plan add details). 
We prove Lemma~\ref{l:SBR} using Conjecture~\ref{con:pre} stating that ``the preimage of a cycle is a cycle'' (a similar in a sense result can be known in folklore).
%(this lemma is known in folklore but I did not find a reference).

Theorem~\ref{t:eqmap} is deduced analogously (see details below) to \cite[Theorem 1(a)]{ST} from Theorem~\ref{t:nphard}.

By $\cdot$ denote some point in $S^n$ for some $n$.

\begin{lemma}[Singular Borromean Rings]\label{l:SBR}
Assume that Conjecture~\ref{con:pre} is true.
For each $k > l \geqslant 1$ let $T := S^l \times S^l$ be the $2l$-dimensional torus with meridian $m := S^l \times \cdot$ and parallel $p := \cdot \times S^l$, and let $S^k_p$ and $S^k_m$ be copies of $S^k$. Then there is no PL map $f : T \sqcup S^k_p \sqcup S^k_m \to \mathbb{R}^{k+l+1}$ satisfying the following three properties:
\begin{enumerate}
    \item the $f$-images of the components are pairwise disjoint;
    \item $fS^k_p$ is linked modulo 2 with $fp$ and is not linked modulo 2 with $fm$, and
    \item $fS^k_m$ is linked modulo 2 with $fm$ and is not linked modulo 2 with $fp$.
\end{enumerate}
\end{lemma}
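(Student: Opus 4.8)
The plan is to argue by contradiction. Suppose such a PL map $f : T \sqcup S^k_p \sqcup S^k_m \to \mathbb{R}^{k+l+1}$ exists, with disjoint images and the prescribed linking pattern. The central idea, generalizing the original Singular Borromean Rings Lemma, is to use the linking conditions on $S^k_p$ and $S^k_m$ to extract, via preimages of generic hyperplanes or via a degree/intersection argument, homological information on the torus $T = S^l \times S^l$ that is self-contradictory because of the product structure. Concretely, I would first perturb $f$ so that it is in general position; then, using that $fS^k_p$ links $fp$ modulo $2$ but not $fm$, I would consider a generic $(k+1)$-dimensional half-space (or a singular chain) $D_p$ in $\mathbb{R}^{k+l+1}$ bounded by $fS^k_p$ and look at $f^{-1}(D_p) \cap T$. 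Here is where Conjecture~\ref{con:pre} (``the preimage of a cycle is a cycle'') enters: it guarantees that the preimage in $T$ of an appropriate cycle is again a cycle, so that $f^{-1}(D_p)$ meets $T$ in a well-defined $\mathbb Z_2$-homology class, and the intersection numbers of this class with $m$ and $p$ are computed by the respective linking numbers.

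The key steps, in order, would be: (1) put $f$ in general position and fix fundamental cycles for $m$, $p$, $S^k_p$, $S^k_m$ and the relevant filling chains in $\mathbb{R}^{k+l+1}$; (2) use Alexander duality / linking-number interpretation to translate conditions (2) and (3) into mod-$2$ intersection numbers: $fS^k_p$ links $fp$ iff a generic filling of $fp$ meets $fS^k_p$ oddly, and similarly for the other pairs; (3) apply Conjecture~\ref{con:pre} to conclude that $Z_p := f^{-1}(\text{filling of } fS^k_p) \cap T$ is a mod-$2$ $l$-cycle in $T$ (dimension count: the filling has dimension $k+1$, $T$ has dimension $2l$, ambient dimension $k+l+1$, so the intersection has dimension $2l + (k+1) - (k+l+1) = l$), and likewise $Z_m$; (4) compute the $\mathbb Z_2$-intersection pairing on $T = S^l\times S^l$, where $H_l(T;\mathbb Z_2)$ is generated by $[m]$ and $[p]$ with $[m]\cdot[m] = [p]\cdot[p] = 0$ and $[m]\cdot[p] = 1$; (5) show that $[Z_p] \cdot [m]$ equals the mod-$2$ linking number of $fS^k_p$ with $fm$ (which is $0$ by hypothesis) while $[Z_p]\cdot [p]$ equals the linking number of $fS^k_p$ with $fp$ (which is $1$), forcing $[Z_p] = [p]$; symmetrically $[Z_m] = [m]$; (6) derive a contradiction — the natural candidate is that $Z_p$ and $Z_m$ can be simultaneously realized by the preimages of two fillings whose intersection pattern in $\mathbb{R}^{k+l+1}$ forces $[Z_p]\cdot[Z_m]$ on $T$ to vanish (e.g. because the fillings of $fS^k_p$ and $fS^k_m$ can be chosen disjoint, or because a triple-intersection/Massey-product-type count in $\mathbb{R}^{k+l+1}$ is zero), whereas $[p]\cdot[m] = 1$ in $H_*(T;\mathbb Z_2)$.

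I would carry out step (6) by the following refinement: since $fS^k_p$, $fS^k_m$, $fT$ are pairwise disjoint and the relevant linking numbers vanish, one can choose a filling $D_m$ of $fS^k_m$ missing $fS^k_p$, and a filling $D_p$ of $fS^k_p$; then $D_p \cap D_m$ is a chain whose boundary involves $fS^k_p \cap D_m$ and $fS^k_m \cap D_p$, both empty or controlled by the zero linking conditions, and whose intersection with $fT$ recovers $Z_p \cap Z_m$ up to the preimage-of-a-cycle correspondence. This should show $[Z_p]\cdot[Z_m] = 0$ in $H_0(T;\mathbb Z_2)$, contradicting $[p]\cdot[m]=1$.

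The main obstacle I expect is step (3) together with the compatibility of all the preimage-cycle identifications: making precise that $f^{-1}$ of a generic chain filling $fS^k_p$ is not just a cycle but represents the \emph{correct} homology class dual to the linking data, and that the three preimage operations can be performed with mutually general-position representatives so that the final intersection count is legitimate. This is exactly the point where Conjecture~\ref{con:pre} does the heavy lifting: without it, $f^{-1}(\part$-filling$)$ need not be a cycle and the whole intersection-theoretic bookkeeping on $T$ collapses. A secondary technical issue is handling the PL (rather than smooth) category throughout — transversality and general position for PL maps of complexes — but this is standard and can be cited. So the essential new content is the organization of the preimage cycles on the torus and the verification that the product structure of $T$ obstructs the linking pattern demanded by (1)–(3).
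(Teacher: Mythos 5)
Your steps (1)--(5) essentially reproduce the first half of the paper's argument (up to an inessential relabelling: the cycle $Z_p=f_T^{-1}D_p$ meets $p$ once and $m$ zero times, hence is homologous to the meridian $m$, not to $p$, and symmetrically for $Z_m$; the swap does not change the target count $[Z_p]\cdot[Z_m]=1$). The genuine gap is in your step (6), which is where the contradiction has to come from. You claim that, because the images are pairwise disjoint and the relevant mod $2$ linking numbers vanish, one can choose a filling $D_m$ of $fS^k_m$ missing $fS^k_p$, and that the boundary terms $fS^k_p\cap D_m$ and $D_p\cap fS^k_m$ are ``empty or controlled by the zero linking conditions.'' Neither assertion is justified. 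These intersections have dimension $k-l\geqslant 1$, so they are not point counts at all, and the hypotheses only control linking of $fS^k_m$ and $fS^k_p$ with the $l$-spheres $fm$ and $fp$; they say nothing about intersections with the other singular $k$-sphere. Choosing $D_m$ disjoint from $fS^k_p$ is obstructed by the class of $fS^k_m$ in $H_k(\mathbb{R}^{k+l+1}\setminus fS^k_p;\mathbb{Z}_2)$, which is Alexander-dual to $\check H^l$ of the $k$-dimensional singular polyhedron $fS^k_p$ and need not vanish; nothing in conditions (1)--(3) forces it to. So the asserted conclusion $[Z_p]\cdot[Z_m]=0$ does not follow as written.

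The paper closes this half differently and never needs disjoint fillings: it takes a third cone $C_T$ over $f(T)$ and applies the Leibniz formula to the $1$-chain $C_T\cap C_p\cap C_m$, so that $|f(T)\cap C_p\cap C_m|+|C_T\cap f(S^k_p)\cap C_m|+|C_T\cap C_p\cap f(S^k_m)|$ is even. The two extra terms are then killed by pulling back to the spheres: by general position $C_T\cap C_m$ avoids the self-intersections of $f(S^k_p)$, so $|C_T\cap f(S^k_p)\cap C_m|=|f_{S^k_p}^{-1}C_T\cap f_{S^k_p}^{-1}C_m|$, and these preimages are cycles in $S^k_p$ of complementary dimensions $l$ and $k-l$ (both $<k$) --- cycles again by Conjecture~\ref{con:pre}, this time applied on $S^k_p$ and $S^k_m$ rather than on $T$ --- whence the intersection is even by the parity lemma in the sphere. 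This second application of the conjecture on the spheres, together with the cone over $f(T)$, is exactly what your step (6) is missing; your parenthetical ``triple-intersection count'' alternative gestures at the right mechanism, but without these ingredients the argument does not close.
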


%Lemma \ref{SBR} is the generalized version of \cite[Lemma 2.4]{AMSW}.
%The proof of \cite[Lemma 2.4]{AMSW} uses $n = 2l$ only three times: 
%\begin{itemize}
%    \item to prove that both $f^{-1}_{S^n_p}C_T$ and $f^{-1}_{S^n_p}C_m$ are cycles;
%    \item to prove that both $f^{-1}_{S^n_m}C_T$ and $f^{-1}_{S^n_m}C_p$ are cycles;
%    \item to prove that $f^{-1}_{T}C_p$ is a cycle.
%\end{itemize}
%To finish the proof without using $n = 2l$, we need the following lemma. 

\begin{remark} 
\begin{itemize}
    \item[(a)] The condition $l \geqslant 1$ is essential in Lemma~\ref{l:SBR}. Indeed, there exists a PL map $f : T \sqcup S^{k}_p \sqcup S^k_m \to S^{k+l+1}$ satisfying the properties of Lemma~\ref{l:SBR} for $k \geqslant 1,\ l = 0$.
    
    Let $m:=\{\pm1\} \times \{1\}$ and $p:=\{1\} \times \{\pm1\}$.
    
    %By $\mathds{1}_{n, m}$ denote the $n$-th standard unit vector in $\mathbb{R}^m$.  
    By $\mathds{1}_{n, m}$ denote the point in $\mathbb{R}^m$ with $n$-th coordinate equals to one and the others equal to zeros.
    
    Define the map $f: \underbrace{ \{\pm1\} \times \{\pm1\} }_{=T} \sqcup S^k_p \sqcup S^k_m \to \mathbb{R}^{k+1}$ by the rule
    
    \begin{equation*}
    f(x) = 
    \begin{cases}
    x \times {(0)}^{k-1} &\text{if $x \in T$}\\
    x + \mathds{1}_{2, k+1} - \mathds{1}_{1, k+1} &\text{if $x \in S_m^k$}\\
    x + \mathds{1}_{1, k+1} - \mathds{1}_{2, k+1} &\text{if $x \in S_p^k.$}
    \end{cases}
    \end{equation*}
    
    It can be easily checked that the map $f$ satisfies the properties.
    
    %Since $\left|f S_p^k \cap \Con (fp)\right| = \left|S^k \times 0 \cap \cdot \times D^{k+1}\right| = \left|\{(\cdot, 0)\}\right| = 1$ and $f S_p^k \cap \Con (fm) = S^k \times 0 \cap D^{k+1} \times \cdot  = \varnothing $, the map $f$ satisfies the second property. Analogously, the map $f$ satisfies the third property.
    
    \item[(b)] The condition $k > l$ is essential in Lemma~\ref{l:SBR}. Indeed, there exists a PL map $f : T \sqcup S^{k}_p \sqcup S^k_m \to S^{k+l+1}$ satisfying the properties of Lemma~\ref{l:SBR} for $l = k$.  

    %In this case  $\mathbb{R}^{n+l+1} = \mathbb{R}^{2n+1} \cong \left( S^{2n+1} - \cdot \right) \cong \left( \partial D^{2n+2} - \cdot \right) \cong \left( \partial D^{n+1} \times D^{n+1} \cup D^{n+1} \times \partial D^{n+1}  - \cdot \right) \cong \left( S^{n} \times D^{n+1} \cup D^{n+1} \times S^{n} - \cdot \right)$. Hence there exists an embedding $f: S^{n} \times D^{n+1} \cup D^{n+1} \times S^{n} \hookrightarrow \mathbb{R}^{n+l+1}$.
    Define the map $f: T \sqcup S^k_p \sqcup S^k_m \to \underbrace{S^{k} \times D^{k+1} \cup D^{k+1} \times S^{k}}_{=S^{2k+1}}$ by the rule

    \begin{equation*}
    f(x) = 
    \begin{cases}
    x &\text{if $x \in T$}\\
    (0, x) &\text{if $x \in S_m^k$}\\
    (x, 0) &\text{if $x \in S_p^k.$}
    \end{cases}
    \end{equation*}
    Clearly, the map $f$ satisfies the first property from Lemma \ref{l:SBR}. Since $\left|f S_p^k \cap \Con (fp)\right| = \left|S^k \times 0 \cap \cdot \times D^{k+1}\right| = \left|\{(\cdot, 0)\}\right| = 1$ and $f S_p^k \cap \Con (fm) = S^k \times 0 \cap D^{k+1} \times \cdot  = \varnothing $, the map $f$ satisfies the second property. Analogously, the map $f$ satisfies the third property.
\end{itemize}

\end{remark}

\section{Proofs}
%\begin{center}
%    $\xymatrixcolsep{2pc}\xymatrix{
%    \text{Lemma }\ref{lem} \ar@/_/@{=>}[rd] & &  \\
%    & \text{Lemma }\ref{SBR} \ar@{=>}[r]^-{\cite{ST}} & \text{Theorem }\ref{t:nphard} \ar@/_/@{=>}[rd] & \\
%    \cite{Sk14} \ar@/^/@{=>}[ru]& & & \text{Theorem }\ref{t:eqmap} \\
%    & & \cite{CKV} \ar@/^/@{=>}[ru]
%}$
%\end{center}

\begin{proof}[Proof of Theorem~\ref{t:nphard}] 
Formally, Theorem \ref{t:nphard} follows by modified version of \cite[Theorem 2]{ST} obtained by substitution the hypothesis ``$d = \frac{3k}{2} + 1$" with ``$k+2 \leqslant d \leqslant \frac{3k}{2} + 1$".

The proof of the modified version of \cite[Theorem 2]{ST} is obtained from the proof of \cite[Theorem 2]{ST} by:
%It is sufficient to replicate both the `if' part, and the construction of $K(\Phi)$ from \cite[proof of Theorem 2]{ST} and make the following change in the `only if' part :
\begin{itemize}
    %\item repeating the `if' part of proof of \cite[Theorem 2]{ST}; 
    %\item repeating the construction of $K(\Phi)$ from proof of \cite[Theorem 2]{ST};
    \item setting ``$l := d - k - 1$'';
    \item changing the second sentence of the second paragraph of `construction of $K(\Phi)$' to ``Take a triangulation of $2l$-torus $T$ extending triangulations of its meridian and parallel $a$ and $b$ as boundaries of $(l + 1)$-simplices.'';
    \item adding the sentence ``Since $k \geqslant 2l$, $K(\Phi)$ is a $k$-complex.'' after the second paragraph of `construction of $K(\Phi)$';
    \item changing the last sentence of the `only if' part to ``Since $k \geqslant 2l$, all this contradicts the Singular Borromean Rings Lemma~\ref{l:SBR} applied to the restriction of $f$ to $S_q \sqcup S_r \sqcup T_{qr}$".
\end{itemize}
\end{proof}

\begin{proof}[Proof of Theorem \ref{t:eqmap}]
Theorem \ref{t:eqmap} follows by Theorem \ref{t:nphard} and the existence
of a polynomial algorithm for checking the existence of equivariant maps \cite{CKV}. Indeed, for fixed $d, k$ it is polynomial time decidable whether there exists an equivariant map $\widetilde{K} \to S^{d-1}$ \cite{CKV}. Given that almost embeddabilty implies the existence of an equivariant map, Theorem \ref{t:nphard} implies Theorem \ref{t:eqmap}.
\end{proof}

%For Lemma~\ref{lem} required for proof of Singular Borromean Rings Lemma~\ref{SBR} and mentioned before formulation of Lemma~\ref{SBR} 
Recall some known 
%standard 
definitions.

%A \textit{$c$-chain} (modulo 2) in a complex $K$ is a set of $c$-simplices in $K$. The \textit{boundary} of a $c$-chain in $K$ is the set of those $(c-1)$-simplices In $K$ that are contained in an odd number of the chain's simplices

%A \textit{$c$-chain} (modulo 2) in a manifold $M$ is a $c$-chain in some complex that geometric realization is homeomorphic to manifold $M$.
A \textit{$c$-chain} in $\mathbb{R}^d$  is a finite set $C$ of $c$-simplices in $\mathbb{R}^d$. By $V(C)$ denote the set of vertices of simplices from $C$.

A $c$-chain $C$ in $\mathbb{R}^d$ is called \textit{simplicial} if the intersection of any two $c$-simplices in $C$ is a face of both of them whenever their intersection is not empty.
%This definition is not standard, but it is useful for our purposes.

The \textit{boundary} of a $c$-chain in $\mathbb{R}^d$ is the set of those $(c-1)$-simplices in $\mathbb{R}^d$ that are faces of an odd number of the chain's simplices.

A chain whose boundary is empty is called a \textit{cycle}.

For a set $V = \{ v_i \}_{i=1}^n$ of points in $\mathbb{R}^d$ denote $\left\{ \sum\limits_{i = 1}^n \alpha_i v_i \left|\ \sum\limits_{i = 1}^n \alpha_i = 1 \right. \right\}$ by $\Aff(V)$.

A set $V$ of points in $\mathbb{R}^d$ is \textit{in strong general position} \cite{PS} if for any collection $\{ V_1, V_2, \ldots , V_r \}$ of $r$ pairwise disjoint subsets of $V$ the following holds ($\dim \varnothing := -\infty$):

$$ \dim \bigcap\limits^r_{i = 1} \Aff(V_i) \leqslant \sum\limits^r_{i=1} \dim \Aff(V_i) - d(r - 1).$$

%A chain in $\mathbb{R}^d$ is \textit{in general position (in strong general position)} if set of its vertices is in general position (in strong general position).

A set $W$ of points in $\mathbb{R}^d$ is \textit{in strong general position with respect to} a $c$-chain $C$ in $\mathbb{R}^d$ if $W \cap V(C) = \varnothing$ and the set $W \sqcup V(C)$ is in strong general position.

For any PL manifold $N$ a PL map $f: N \to \mathbb{R}^d$ is \textit{in strong general position with respect to} a $c$-chain $C$ in $\mathbb{R}^d$ if there exists a triangulation $T_N$ of $N$ such that 
\begin{itemize}
    \item the map $f$ is linear on each simplex of $T_N$, and
    %\item for any vertex $v$ of $T_N$ the image $fv$ is not a vertex of the chain $C$, and
    %\item the set of vertices of both $C$, and $f(T_N)$ is in strong general position.
    \item the set of images of vertices of $T_N$ is in strong general position with respect to the chain $C$.
    %there is no $(d-1)$-hyperplane containing at least $d+1$ vertices from the disjoint union of the chain's vertices and the $f$-images of vertices of $T_N$.
\end{itemize}

\begin{conjecture}[preimage of a cycle is a cycle]\label{con:pre}
Let $c,n,d$ be non-negative integers such that $c < d$.
For any $c$-chain $C$ in $\mathbb{R}^d$, $n$-dimensional closed PL manifold $N$, and PL map $f: N \to \mathbb{R}^d-\partial C$ in strong general position with respect to $C$, the preimage $f^{-1}C$ is the support of a $(c+n-d)$-cycle in $|T_N| \subset \mathbb{R}^m$.
\end{conjecture}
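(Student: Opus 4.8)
The plan is to prove Conjecture~\ref{con:pre} by a local, simplex-by-simplex analysis, reducing the global statement to a linear-algebraic fact about preimages of affine subspaces under linear maps in strong general position. Fix a triangulation $T_N$ witnessing that $f$ is in strong general position with respect to $C$, so $f$ is linear on each simplex of $T_N$ and the images of the vertices of $T_N$ together with $V(C)$ form a set in strong general position. The first step is to set up the right notion of the preimage as a chain: for each $n$-simplex $\Delta$ of $T_N$ and each $c$-simplex $\gamma$ of $C$, the strong general position hypothesis forces $f|_\Delta^{-1}(\gamma)$ (more precisely $f|_\Delta^{-1}(\Aff \gamma) \cap \Delta \cap f|_\Delta^{-1}(\gamma)$) to be a convex polytope of dimension exactly $c + n - d$ (or empty), sitting inside $\Delta$; after a canonical subdivision this polytope is a subcomplex, and summing over all $\gamma \in C$ (mod $2$) one obtains a $(c+n-d)$-chain $D_\Delta$ inside $\Delta$ whose support is $f|_\Delta^{-1}(C)$. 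Taking $D := \sum_{\Delta} D_\Delta$ gives a $(c+n-d)$-chain in $|T_N|$ whose support is $f^{-1}C$; the content of the conjecture is that $\partial D = \varnothing$.

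The key step is the computation of $\partial D$. A $(c+n-d-1)$-simplex $e$ in the support of $\partial D$ lies in the interior of a unique simplex $\theta$ of $T_N$ of some dimension $j \leqslant n$. I would organize the argument by where $e$ lies relative to the skeleta: (i) if $e$ lies in the interior of an $n$-simplex $\Delta$, then $e$ is a face of faces of $D_\Delta$ coming from the preimages of various $c$-simplices $\gamma$ of $C$; since $C$ is a cycle, each $(c-1)$-face in $\partial C$ is covered an even number of times, and strong general position makes the correspondence between faces of preimages and faces of the image cell structure behave linearly, so the parity cancels. (ii) If $e$ lies in the interior of an $(n-1)$-face $F$ of $T_N$, one uses that $N$ is a closed manifold, so $F$ is a face of exactly two $n$-simplices $\Delta_1, \Delta_2$; the contribution of $e$ to $\partial D_{\Delta_1}$ and to $\partial D_{\Delta_2}$ agree (both equal $f|_F^{-1}(C)$ near $e$), and mod $2$ they cancel. (iii) Lower-dimensional faces $\theta$ are handled either by the same closed-manifold/link argument applied to the link of $\theta$, or ruled out by a dimension count: strong general position bounds $\dim(f|_\theta^{-1}(\Aff\gamma) \cap \theta) \leqslant c + j - d$, and for this to reach $c+n-d-1$ one needs $j \geqslant n-1$, so in fact only cases (i) and (ii) occur. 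Assembling these, every $(c+n-d-1)$-simplex appears an even number of times in $\partial D$, i.e. $D$ is a cycle.

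The main obstacle I expect is making case (i) — the cancellation happening \emph{inside} a single top-dimensional simplex — fully rigorous, because it is exactly here that the hypothesis ``$\partial C = \varnothing$'' must be used, and one must show that the boundary of the polytopal preimage $f|_\Delta^{-1}(C)$ decomposes as (preimage of $\partial C$) $\cup$ (part of the preimage lying on $\partial \Delta$), with no spurious ``interior'' boundary. This is a statement that the operation ``intersect with the linear slice $f|_\Delta^{-1}(\cdot)$'' commutes with taking boundary of chains, which is true precisely because $f|_\Delta$ is linear and the images are in strong general position (so no higher-multiplicity incidences occur and dimensions are exactly additive). A clean way to phrase it is: the preimage chain operation $f|_\Delta^{-1}$ is a chain map from the chain complex of the relevant image cell structure to that of $\Delta$, shifting degree by $n-d$; granting this, $\partial f|_\Delta^{-1}(C) = f|_\Delta^{-1}(\partial C)$ plus boundary-of-$\Delta$ terms, and $\partial C = \varnothing$ kills the first summand. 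I would isolate this as a lemma about linear maps and prove it by an explicit description of $f|_\Delta^{-1}(\gamma)$ as $\{x \in \Delta : f(x) \in \gamma\}$, a polytope cut out by the affine conditions defining $\Aff\gamma$ together with the facet inequalities of $\gamma$ and of $\Delta$, and then checking incidences facet by facet.

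Two remarks on the setup that I would include to keep the argument honest. First, strong general position is used in two distinct ways — to guarantee that every nonempty preimage $f|_\Delta^{-1}(\gamma)$ has the \emph{expected} dimension $c+n-d$ (so $D$ is genuinely a chain in that degree, with no lower-dimensional debris contributing and no higher-dimensional ``fat'' pieces), and to guarantee the dimension drop on lower skeleta that eliminates case (iii). Second, the ambient $\mathbb{R}^m$ in the statement is just the space in which the geometric realization $|T_N|$ is linearly embedded, so ``support of a cycle in $|T_N| \subset \mathbb{R}^m$'' is simply the assertion that $f^{-1}C$ carries a mod-$2$ simplicial cycle of $|T_N|$; no condition on $m$ is needed beyond $m$ being large enough to embed $T_N$, and the whole proof takes place combinatorially in $T_N$ (suitably subdivided).
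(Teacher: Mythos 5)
Your strategy matches the paper's --- polytopal preimages $f|_\Delta^{-1}(\gamma)$, parity count over $(c+n-d-1)$-faces stratified by the skeleta of $T_N$ --- but there are two problems. First, you repeatedly invoke ``$\partial C = \varnothing$'', yet the statement does not assume $C$ is a cycle; it assumes $f: N \to \mathbb{R}^d - \partial C$, i.e., that $f$ \emph{avoids} $\partial C$. You are quietly proving a different statement. What actually saves your case (i) under the real hypothesis is that the carrier $(c-1)$-simplex $\kappa \supset f(e)$ meets $f(N)$ and therefore cannot lie in $\partial C$, so $\kappa$ is a face of an even number of $c$-simplices of $C$; that is exactly the paper's reasoning. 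The conclusion is the same, but the justification you give is for a hypothesis not made, and the one actually made is what does the work.

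Second, and more substantively, you dispatch the passage from a family of polytopal preimages to a genuine simplicial chain with a parenthetical ``after a canonical subdivision.'' In this paper a $c$-chain is just a finite set of $c$-simplices in $\mathbb{R}^d$, with no requirement that distinct members meet only along common faces; so the preimage polytopes $f|_\Delta^{-1}(\gamma_1)$ and $f|_\Delta^{-1}(\gamma_2)$ can overlap badly, and the ``chain map'' your case (i) relies on does not exist until $C$ has first been replaced by a simplicial chain $C'$ with the same support and still in general position relative to the vertex images --- which is precisely the paper's Conjecture~\ref{con:simpl}, stated but \emph{not proved} --- and until one has a bookkeeping result (the paper's Lemma~\ref{eq}, together with the paper's three-case verification that the preimage polytopes pairwise intersect only in boundary faces) that converts even incidence counts into an honest simplicial cycle. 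Your sketch assumes this entire package goes through without flagging it as a difficulty; it is exactly the point at which the paper's own argument remains conditional.
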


Notice that the original proof of \cite[Singular Borromean Rings Lemma 2.4]{AMSW} implicitly used a simpler version of Conjecture \ref{con:pre} with additional hypothesis that the chain $C$ avoids self-intersection points of $f(N)$.

%Now we are ready to prove Lemma \ref{SBR}.

\begin{proof}[Proof of Lemma~\ref{l:SBR} (see the full proof in the appendix)] 
It is sufficient to make the following changes in proof of \cite[Lemma 2.4]{AMSW}:
\begin{itemize}
    \item the second part of the sentence after $(**)$ should be changed to ``they are cycles by Conjecture~\ref{con:pre} and because  $f(S_p^n) \cap \partial (C_T) = f(S_p^n) \cap f(T) = \varnothing = f(S_p^n) \cap f(S_m^n) = f(S_p^n) \cap \partial (C_m)$.";
    
    \item the second sentence in the text after $(****)$ should be changed to ``By Conjecture~\ref{con:pre} and $f(T) \cap \partial (C_p) = f(T) \cap f(S_p^n) = \varnothing$, it follows that the $l$-chain $f_T^{-1}C_p$ is a cycle in $T$."
\end{itemize}
\end{proof}

%A triangulation $T$ of a PL manifold $N$ is \textit{consistent} with a PL map $f: N \to \mathbb{R}^m$ if $f$ is linear on each simplex of $T$.
\section{Proof of Conjecture~\ref{con:pre}}

For Conjecture~\ref{con:pre} we need 
%the next two auxiliary statements: 
Lemma~\ref{eq} and Conjecture~\ref{con:simpl}.

Let us recall some known definitions.

The intersection of finite number of open half-spaces and an $n$-hyperplane is called an {\it open $n$-polytope} if this intersection is bounded and non-empty.
The closure of an open $n$-polytope is called an {\it $n$-polytope}.

Let
\begin{itemize}
    \item $\sigma, \tau$ be polytopes in $\mathbb{R}^d$;
    \item $[\tau : \sigma] \in \mathbb{Z}_2$ be the characteristic function of `$\sigma$ is a face of $\tau$';
    \item $[\sigma \subset \partial\tau] \in \mathbb{Z}_2$ be the characteristic function of `$\sigma \subset \partial\tau$';
    \item $P$ be a set of polytopes in $\mathbb{R}^d$;
    \item $[P:\sigma] := \sum\limits_{\tau \in P} [\tau:\sigma]$;
    \item $[P:\sigma]^{\text{inc}} := \sum\limits_{\tau \in P} [\sigma \subset \partial\tau]$.
\end{itemize}

%\begin{lemma}[equivalence of cycle definitions]\label{eq}
%Let $T_N$ be a triangulation body of $n$-dimensional PL manifold $N$.
%Let $P$ be a set of $c$-simplices in $T_N$. For any $(c-1)$-simplex $\sigma$ in $T_N$ two following properties are satisfied:
%\begin{itemize}
%    \item[1.] intersection of any two different simplices from $P$ is a simplex of dimension at most $c-1$;
%    \item[2.] the number of simplices $\tau \in P$ such that $\sigma \subset \partial \tau$ is even.
%\end{itemize}

%Then the $P$ is a $c$-cycle in $T_N$.
%\end{lemma}

\begin{lemma}\label{eq}
%Let $T_N$ be a triangulation of an $n$-dimensional PL manifold $N$. 
Let $P$ be a finite set of $c$-polytopes in $\mathbb{R}^n$. %$|T_N| \subset \mathbb{R}^n$. 
Suppose that
%two following properties are satisfied:
\begin{itemize}
    \item[1.] for any two different polytopes $\sigma_1, \sigma_2$ from $P$ the intersection $\sigma_1 \cap \sigma_2$ is a polytope of dimension at most $c-1$ and is contained in $\partial \sigma_1 \cap \partial \sigma_2$;
    
    \item[2.] $[P:\sigma]^{\text{inc}} = 0$ for any $(c-1)$-polytope $\sigma$ in $\mathbb{R}^n$ . 
    %for any $(c-1)$-polytope $\sigma$ in $\mathbb{R}^n$ the number of polytopes $\tau \in P$ such that $\sigma \subset \partial \tau$ is even.
\end{itemize}

Then the union of $P$ is the support of a simplicial $c$-cycle in $\mathbb{R}^n$. %(as a simplicial cycle in $\mathbb{R}^n$).
\end{lemma}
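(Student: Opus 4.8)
The plan is to exhibit the union $\bigcup P$ as a $c$-cycle directly, by first refining $P$ to a simplicial chain and then checking that every $(c-1)$-face is covered an even number of times. First I would fix a triangulation: since each $\sigma \in P$ is a $c$-polytope and, by hypothesis~1, the polytopes of $P$ meet only along common faces of dimension $\leqslant c-1$, I can choose a single simplicial complex $L$ in $\mathbb{R}^n$ such that each $\sigma \in P$ is a subcomplex of $L$ and $L$ has no simplices of dimension $> c$ lying in $\bigcup P$ (triangulate the arrangement of the finitely many polytopes, coherently on the shared faces; this is where hypothesis~1 is used). Let $C$ be the $c$-chain consisting of all $c$-simplices of $L$ contained in some $\sigma \in P$; then $|C| = \bigcup P$, and $C$ is simplicial by construction. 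It remains to show $\partial C = \varnothing$, i.e. every $(c-1)$-simplex $\rho$ of $L$ is a face of an even number of $c$-simplices of $C$.

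Next I would compute this parity locally. Fix a $(c-1)$-simplex $\rho$ of $L$ with $\rho \subset \bigcup P$, and let $x$ be a point in the relative interior of $\rho$. Take a small transverse $2$-disk (or just a small transverse segment on each side) to $\rho$; the number of $c$-simplices of $C$ containing $\rho$ equals, for each polytope $\sigma \in P$ with $\rho \subset \sigma$, either $2$ (if $\rho$ lies in the interior of $\sigma$, i.e. $\rho \not\subset \partial\sigma$) or the number of $c$-faces of $\sigma$ containing $\rho$ when $\rho \subset \partial\sigma$. For a $c$-polytope, a $(c-1)$-face $\tau$ of its boundary $\partial\sigma$ lies in exactly one $c$-face of $\sigma$ — namely the facet it spans — and $\rho \subset \tau$ for exactly one facet $\tau$ of $\sigma$ through $x$ on each ``sheet'' of $\partial\sigma$ through $x$; since $\partial\sigma$ is a closed $(c-1)$-manifold, locally near a generic interior point of a facet it has exactly one sheet, contributing one $c$-simplex of $C$ per incidence $[\sigma \subset \partial\tau']$ — but more cleanly: $\rho$ is a face of exactly $[\sigma : \rho] + 2\cdot(\text{something})$-many... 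I would instead argue that, modulo $2$, the number of $c$-simplices of $C$ through $\rho$ equals $\sum_{\sigma \in P}\big(2\cdot\mathbbm{1}[\rho \not\subset \partial\sigma] + \mathbbm{1}[\rho \subset \partial\sigma]\big) \equiv \sum_{\sigma \in P}\mathbbm{1}[\rho \subset \partial\sigma] = [P:\rho]^{\mathrm{inc}} = 0$ by hypothesis~2, where the key point is that an interior $(c-1)$-face contributes an even count (it separates locally two $c$-simplices on each side in $L$) while a boundary $(c-1)$-face $\rho \subset \partial\sigma$ contributes exactly one, because $\partial\sigma$ being a PL $(c-1)$-sphere is ``one-sided from inside $\sigma$''.

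I expect the main obstacle to be the bookkeeping in this last step: making precise that every $(c-1)$-simplex $\rho$ of the chosen triangulation that meets the relative interior of a facet of $\sigma$ is a face of exactly one $c$-simplex of $C$ coming from $\sigma$, while every $\rho$ in the interior of $\sigma$ is a face of exactly two. Equivalently, one must verify $\partial C = \sum_{\sigma}\partial(\text{triangulated }\sigma)$ and that $\partial(\text{triangulated }\sigma)$ is exactly the triangulation of $\partial\sigma$ (standard for a convex $c$-polytope), then note that the total $(c-1)$-chain $\sum_\sigma \partial\sigma$ has, as its multiplicity on each $(c-1)$-simplex $\rho$, precisely $[P:\rho]^{\mathrm{inc}}$ once we observe that distinct polytopes share a facet-sheet through $\rho$ only along $\partial\sigma_1 \cap \partial\sigma_2$ (hypothesis~1), so no cancellation between the boundary of the triangulation and genuine shared interior faces is lost. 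Granting the triangulation lemma (which is routine but needs hypothesis~1 for coherence), the cycle condition then reduces cleanly to hypothesis~2, and the simpliciality of $C$ is immediate from working inside a single complex $L$.
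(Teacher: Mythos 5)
Your plan is correct, but it reaches the lemma by a genuinely different route than the paper. The paper first reduces to the case where $P$ consists of simplices (triangulating the polytopes in one line) and then argues by contradiction, purely combinatorially: it picks a $(c-1)$-simplex $\sigma$ maximal under inclusion among those with $[P:\sigma]=1$ and computes $[P:\sigma]^{\text{inc}}=\sum_{\sigma'\in\{\sigma\}\cup L}[P:\sigma']=1$ (where $L$ is the set of $(c-1)$-simplices strictly containing $\sigma$, all of which have $[P:\sigma']=0$ by maximality), contradicting hypothesis~2; no global compatible triangulation enters the main count. You instead build one complex $L$ triangulating $\bigcup P$ with every polytope a subcomplex, let $C$ be the $c$-simplices of $L$ lying in some member of $P$, and check $\partial C=\varnothing$ locally via the pseudomanifold-with-boundary property of a triangulated $c$-polytope (a $(c-1)$-simplex $\rho$ is a face of two of its $c$-simplices when $\rho\subset\sigma$, $\rho\not\subset\partial\sigma$, and of exactly one when $\rho\subset\partial\sigma$), so that the multiplicity of $\rho$ in $\partial C$ equals $[P:\rho]^{\text{inc}}$ modulo $2$, which vanishes by hypothesis~2. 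Your route makes the role of hypothesis~2 completely transparent and treats the polytopal statement head-on, at the price of invoking two standard PL facts (a common triangulation in which each polytope is a subcomplex, and the one-or-two incidence count); the paper's route avoids these but hides the polytope-to-simplex reduction in a remark and needs the maximality bookkeeping to relate containment in $\partial\tau$ to being a face. When writing your argument up, make two points explicit: hypothesis~1 (pairwise intersections of dimension at most $c-1$) is exactly what guarantees that each $c$-simplex of $L$ lies in at most one polytope of $P$, so boundary multiplicities add polytope by polytope; and in your parity sum the term $2\cdot\mathbbm{1}[\rho\not\subset\partial\sigma]$ should read $2\cdot\mathbbm{1}[\rho\subset\sigma,\ \rho\not\subset\partial\sigma]$ — harmless modulo $2$, but as written it miscounts polytopes not containing $\rho$ at all.
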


\begin{proof} [Proof]
We prove the simpler version of Lemma~\ref{eq} in which all ``polytopes'' are changed to ``simplices'' and the final sentence are changed to ``Then $P$ is a simplicial $c$-cycle in $\mathbb{R}^n$.''. The original lemma can easily be reduced to the simpler version by triangulating polytopes.

Denote by $\partial P$ the finite set of $(c-1)$-simplices $\sigma$ in $\mathbb{R}^n$ for which 

(*) the number of simplices $\tau \in P$ such that $\sigma$ is a simplex of $\partial\tau$ is odd ($\Longleftrightarrow [P:\sigma] = 1$).

Assume the contrary, i.e., that the set $\partial P$ is non-empty.

%Order the set $\partial P$ by the following rule:
%$$ \sigma_1 \leqslant \sigma_2 \Longleftrightarrow \sigma_1 \subset \sigma_2 
%\text{ where } \sigma_1, \sigma_2 \in \partial P.$$
Let $\sigma$ be a maximal element of the set $\partial P$ ordered by the inclusion. 
Then $\sigma$ is a $(c-1)$-simplex such that (*) holds. 
% and for each $(c-1)$-simplex $\sigma' \varsupsetneq \sigma$ the property (*) does not hold.
%Hence $[\sigma:P]^{\text{inc}}=1$.
Let $L := \left\{ \sigma' \ \left|\ \sigma \varsubsetneq \sigma',\ \sigma' \text{ is a $(c-1)$-simplex of } \tau \text{ for some } \tau \in P \right. \right\}$.

Then
$$[P:\sigma]^{\text{inc}} = \sum\limits_{\tau \in P} [\sigma \subset \partial\tau] = \sum\limits_{\sigma' \in \{\sigma\} \cup L} [P:\sigma'] = \underbrace{[P:\sigma]}_{=1} +  \sum\limits_{\sigma' \in L} \underbrace{[P:\sigma']}_{=0} = 1.$$
Hence (2) does not hold.
%For that simplex $\sigma$ the property 2 from formulation of lemma is not satisfied. 
This contradiction concludes the proof.
\end{proof}

A set $V$ of points in $\mathbb{R}^d$ is \textit{in general position} if for any $i \in [d-1]$ the following holds:

there is no $i$-hyperplane containing at least $i+2$ points from $V$.

A set $W$ of points in $\mathbb{R}^d$ is \textit{in general position with respect to} a $c$-chain $C$ in $\mathbb{R}^d$ if for any $c$-simplex $\sigma \in C$ the set $W$ does not contain vertices of $\sigma$ and the set of vertices of $\sigma$ and points from $W$ is in general position.

For any PL manifold $N$ a PL map $f: N \to \mathbb{R}^d$ is \textit{in general position with respect to} a $c$-chain $C$ in $\mathbb{R}^d$ if there exists a triangulation $T_N$ of $N$ such that 
\begin{itemize}
    \item the map $f$ is linear on each simplex of $T_N$, and
    %\item for any vertex $v$ of $T_N$ the image $fv$ is not a vertex of the chain $C$, and
    \item the set of images of vertices of $T_N$ is in general position with respect to the chain $C$.
\end{itemize}

\begin{conjecture}\label{con:simpl}
Let $C$ be a $c$-chain in $\mathbb{R}^d$ with $c < d$. Let $U$ be a finite set of points in $\mathbb{R}^d$ in strong general position with respect to $C$. Then the union of $C$ is the support of a simplicial $c$-chain $C'$ in $\mathbb{R}^d$ such that $U$ is in general position with respect to $C'$.
\end{conjecture}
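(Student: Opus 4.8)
The plan is to take for $C'$ the set of top-dimensional simplices of a common simplicial refinement of the simplices of $C$, built generically enough that the strong general position of $U\sqcup V(C)$ forces the required general position of $U$ with respect to $C'$. First I would fix a finite set $\mathcal H$ of hyperplanes of $\mathbb R^d$ such that: (i) for each $\sigma\in C$ some $(d-c)$-subset of $\mathcal H$ has intersection $\Aff(\sigma)$, and (ii) for each facet $\rho$ of each $\sigma\in C$ there is $H\in\mathcal H$ with $\Aff(\rho)\subseteq H\not\supseteq\Aff(\sigma)$; all hyperplanes of $\mathcal H$ are chosen generically subject to these containment constraints. Then every $\sigma\in C$ is the intersection of $\Aff(\sigma)$ with the half-spaces bounded by the hyperplanes of $\mathcal H$ containing its facets, so $\sigma$ is a subcomplex of the hyperplane arrangement $\mathcal P:=\mathcal P(\mathcal H)$; hence $X:=\bigcup_{\sigma\in C}\sigma$ is a finite polytopal subcomplex $\mathcal P|_X$ of $\mathcal P$ with $|\mathcal P|_X|=X$.

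Next I would pick a triangulation $\mathcal T$ of $\mathcal P|_X$ introducing no new vertices (e.g. a pulling triangulation with respect to a fixed ordering of the $0$-cells of $\mathcal P$): it refines every $\sigma\in C$, its vertices are $0$-cells of $\mathcal P$, and $|\mathcal T|=X$. Put $C':=\{\tau\in\mathcal T:\dim\tau=c\}$. As $\mathcal T$ is a simplicial complex, $C'$ is a simplicial $c$-chain; as each $\sigma\in C$ is a $c$-dimensional subcomplex of $\mathcal T$ and each $c$-simplex of $\mathcal T$ lies in some $\sigma\in C$, we get $|C'|=\bigcup_{\sigma\in C}\sigma=|C|$. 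This produces a simplicial $c$-chain with the correct support, and it remains only to verify the general position with $U$.

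The easy half is $U\cap V(C')=\varnothing$: each vertex $w$ of $\mathcal T$ is a $0$-cell of $\mathcal P$ lying in $X$, hence $w\in\Aff(\sigma)$ for some $\sigma\in C$, and strong general position applied to the disjoint sets $\{u\}$ and $V(\sigma)$ gives $\dim\big(\{u\}\cap\Aff(\sigma)\big)\le 0+c-d<0$, so $u\notin\Aff(\sigma)$. The real content is: for every $c$-simplex $\tau\in C'$, the set $V(\tau)\sqcup U$ is in general position. Here $\tau$ lies in a single $\Aff(\sigma_\tau)$, so its $c+1$ (automatically affinely independent) vertices lie on the $c$-flat $\Aff(\sigma_\tau)$, and each of them is the intersection of $\Aff(\sigma_\tau)$ with $c$ further hyperplanes of $\mathcal H$, each of which is generic subject only to containing some flat $\Aff(W)$ with $W\subseteq V(C)$. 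So the claim to prove is that strong general position of $V(C)\sqcup U$ together with the genericity of $\mathcal H$ prevents any $i$-flat with $1\le i\le d-1$ from containing $i+2$ points of $V(\tau)\sqcup U$.

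That last claim is where I expect the difficulty to be. Unwound, it asks one to bound, for disjoint subsets drawn from $U$, from $V(\sigma_\tau)$, and from the vertex-sets of the defining flats of the auxiliary hyperplanes, the dimension of the intersection of the spanned flats; the strong general position inequality $\dim\bigcap_i\Aff(V_i)\le\sum_i\dim\Aff(V_i)-d(r-1)$ is calibrated exactly for this, but the bookkeeping is delicate on two counts. First, the defining flats of distinct auxiliary hyperplanes may share vertices, so one must repeatedly split off common vertex sets before the pairwise-disjointness hypothesis applies. Second, the auxiliary hyperplanes are only generic rather than spanned by points of $V(C)$, so their contribution must be absorbed into the genericity of $\mathcal H$ rather than into strong general position, which forces one to make precise how generic ``generic'' has to be. I would also emphasize that one cannot hope for $V(C')$ to be globally in general position --- if some $\sigma\in C$ acquires an interior new vertex, $\Aff(\sigma)$ already carries $c+2$ points of $V(C')$ --- which is precisely why the condition is, and must be, checked one $c$-simplex at a time, each contributing only its $c+1$ independent vertices together with $U$.
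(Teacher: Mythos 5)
The paper does not prove this statement at all: it is stated as Conjecture~\ref{con:simpl}, and the main results of the paper (Theorems~\ref{t:nphard} and \ref{t:eqmap}) are explicitly conditional on Conjecture~\ref{con:pre}, whose ``proof'' in \S3 invokes Conjecture~\ref{con:simpl} as a black box. So there is no paper proof against which to compare your argument --- you are attempting to close a gap that the author deliberately left open. Your instinct that strong general position (rather than mere general position) is the essential hypothesis is consistent with the paper, which exhibits a counterexample to the weakened statement in the Remark following Conjecture~\ref{con:simpl}.

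As a proof attempt, however, this is a plan rather than a proof, and you say as much yourself. The part you defer --- that for each $c$-simplex $\tau\in C'$ the set $V(\tau)\sqcup U$ is in general position --- is the entire content of the statement; the subdivision machinery before it (hyperplane arrangement, pulling triangulation with no new vertices, top-dimensional cells) is routine and would be accepted by a referee, but the final paragraph is a to-do list, not an argument. Two specific gaps: first, the new vertices of $C'$ split into those \emph{forced} by the intersection geometry of $C$ (intersections of flats $\Aff(\sigma_i)$, possibly further cut by flats of lower faces) and those created by your auxiliary generic hyperplanes; for the forced ones, genericity of $\mathcal H$ buys you nothing, and you must derive the general-position inequality purely from strong general position of $V(C)\sqcup U$, which requires exactly the splitting-off of shared vertex sets that you flag as ``delicate'' but do not carry out. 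Second, for the artificial vertices you appeal to genericity of $\mathcal H$, but ``generic subject to containing prescribed flats'' needs to be formalized as a precise open-dense condition in the relevant parameter space, with a lemma that this genericity together with strong general position of $V(C)\sqcup U$ yields the desired conclusion; without that lemma the proof is circular (``generic enough'' means ``generic enough to make the conclusion true''). Neither gap looks fatal, but until they are filled this does not settle the conjecture, and of course the paper itself offers no help, since it, too, leaves the question open.
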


\begin{remark}
The condition of strong general position is essential in Conjecture~\ref{con:simpl}, i.e. modified version of Conjecture~\ref{con:simpl} obtained by substituting ``strong general position'' with ``general position'' is wrong. It can be shown by the following counterexample.

Let $c := 1,\ d := 2$. By $S$ denote a unit circle in $\mathbb{R}^2$  centered at the origin $O$. Let $xy, zt, uv$ be different diameters of $S$. Let $C := \{xy, zt \}$, $U := \{ u, v \}$. Obviously, for any simplicial $1$-chain $C'$ with $\bigcup C' = \bigcup C$ the point $O$ is a vertex of $C'$. Even though points $x, y, z, t, u, v$ are in general position, the set $U \cup \{ O \}$ is not in general position.
\end{remark}

\begin{proof}[Proof of Conjecture~\ref{con:pre}] 
%If $c + n < m$, then $fN \cap C = 0$ because of general position, i.e. $f^{-1}C = 0$. Next, we will assume that $c + n\geqslant m$. 

Let $T_N$ be a triangulation of $N$ satisfying the conditions from definition of strong general position with respect to $C$. 

By Conjecture~\ref{con:simpl}, there exists a simplicial $c$-chain $C'$ in $\mathbb{R}^d$ such that $\bigcup C = \bigcup C'$ and the map $f$ is in general positon with respect to $C'$.

For any $c$-simplex $\sigma$ in $C'$ and any $n$-simplex $\gamma$ in $T_N$ the intersection $\gamma \cap f^{-1}(\sigma)$ is either an empty set or a $(c+n-d)$-polytope. Then the preimage $f^{-1}(\sigma)$ is the union of a finite set $P_{\sigma}$ of $(c+n-d)$-polytopes in $|T_N| \subset \mathbb{R}^m$. Let $P$ be the disjoint union $\bigsqcup\limits_{\sigma \in C' \text{\ is a $c$-simplex}} P_{\sigma}$.  
%Then the preimage $f^{-1}(C)$ is the union of a finite set $P$ of $(c+n-d)$-polytopes.

In the following three bullet points we prove that
%any non-empty intersection of any two $(c+n-d)$-polytopes $s$ and $t$ from $P$ is a polytope of dimension at most $c+n-d-1$ and 
for any two $(c+n-d)$-polytopes $s$ and $t$ from $P$ having a common point their intersection is a polytope of dimension at most $c+n-d-1$ and $s \cap t \subset \partial s \cap \partial t$ holds.
%This is proved by considering the following three cases.
%a non-empty intersection 
%for any two $(c+n-m)$-polytopes $s$ and $t$ from $P$.
\begin{itemize}
    \item \textit{Both $s$ and $t$ are contained in some $n$-simplex $\gamma$ of $T_N$}. In this case $s \cap t = \left(\gamma \cap f^{-1}(\sigma)\right) \cap \left(\gamma \cap f^{-1}(\tau)\right) = \gamma \cap f^{-1}(\sigma \cap \tau)$ for some $c$-simplices $\sigma$, $\tau$ in $C'$ is a polytope of dimension at most $n+(c-1)-d=c+n-d-1$.
    And, 
    \[
    \gamma \cap f^{-1}(\sigma \cap \tau)
    = \gamma \cap f^{-1}(\partial \sigma \cap \partial \tau)
    \subset
    \left\{
    \begin{array}{cc}
    \gamma \cap f^{-1}(\partial \sigma) \subset \partial s\\
    \gamma \cap f^{-1}(\partial \tau) \subset \partial t
    \end{array}
    \right. \Longrightarrow s \cap t \subset \partial s \cap \partial t.
    \]
    
    \item \textit{The polytopes $s$ and $t$ are contained in some different $n$-simplices $\gamma$ and $\delta$ of $T_N$, respectively.
    And $s, t \in P_{\sigma}$ for some $c$-simplex $\sigma$ in $C'$ }. In this case $s \cap t = \left( \gamma \cap f^{-1}(\sigma) \right) \cap \left(\delta \cap f^{-1}(\sigma)\right) = \gamma \cap \delta \cap f^{-1}(\sigma)$ is a polytope of dimension at most $(n-1)+c-d=c+n-d-1$.
    And, 
    \[
    \gamma \cap \delta \cap f^{-1}(\sigma)
    = \partial \gamma \cap \partial \delta \cap f^{-1}(\sigma)
    \subset
    \left\{
    \begin{array}{cc}
    \partial \gamma \cap f^{-1}(\sigma) \subset \partial s\\
    \partial \delta \cap f^{-1}(\sigma) \subset \partial t
    \end{array}
    \right. \Longrightarrow s \cap t \subset \partial s \cap \partial t.
    \]
    
    \item \textit{The polytopes $s$ and $t$ are contained in some different $n$-simplices $\gamma$ and $\delta$ of $T_N$, respectively. And $s \in P_{\sigma}, t \in P_{\tau}$ for some different $c$-simplices $\sigma, \tau$ in $C'$}. In this case $s \cap t = \left( \gamma \cap f^{-1}(\sigma) \right) \cap \left(\delta \cap f^{-1}(\tau)\right) = \gamma \cap \delta \cap f^{-1}(\sigma \cap \tau)$ is a polytope of dimension at most $(n-1)+(c-1)-d=c+n-d-2$.
    And, 
    \[
    \gamma \cap \delta \cap f^{-1}(\sigma \cap \tau)
    = \partial \gamma \cap \partial \delta \cap f^{-1}(\partial \sigma \cap \partial \tau)
    \subset
    \left\{
    \begin{array}{cc}
    \partial \gamma \cap f^{-1}(\sigma) \subset \partial s\\
    \partial \delta \cap f^{-1}(\tau) \subset \partial t
    \end{array}
    \right. \Longrightarrow s \cap t \subset \partial s \cap \partial t.
    \]
\end{itemize}

By Lemma~\ref{eq}, it suffices to prove that for any $(c+n-d-1)$-polytope $u$ in the boundary of some $(c+n-d)$-polytope in $P$ the number of $(c+n-d)$-polytopes in $P$ containing $u$ is even. Let us consider two cases.
\begin{itemize}
    \item \textit{$u$ is contained in some $(n-1)$-simplex $\eta$ of $T_N$}. There exists only one $c$-simplex $\sigma$ in $C'$ such that $u \subset \partial \left( f^{-1}(\sigma) \cap \gamma \right)$ for some $n$-simplex $\gamma$ of $T_N$ containing $\eta$. By $\delta$ denote another $n$-simplex containing $\eta$. Since $f(u) \cap \sigma \subset f(\delta) \cap \sigma$ and $f(u) \cap \sigma = f(u)$ is not empty, the intersection $\delta \cap f^{-1}(\sigma)$ is $(c+n-d)$-polytope containing $u$. Hence there are only two $(c+n-d)$-polytopes containing $u$.
    \item \textit{$u$ is contained in some $n$-simplex $\gamma$ of $T_N$ and intersects the interior of $\gamma$}. As in the previous case, there exists a $c$-simplex $\sigma$ such that $u \subset \partial \left( f^{-1}(\sigma) \cap \gamma \right)$. By $\kappa$ denote a $(c-1)$-simplex containing $f(u)$. Let $\tau$ be a $c$-simplex  in $C'$. Then $u \subset \gamma \cap f^{-1}(\tau)$ if and only if $\kappa \subset \tau$. Since the number of $c$-simplices containing $\kappa$ is even, we have that the number of $(c+n-d)$-polytopes containing $u$ is even.
\end{itemize}

\end{proof}

Apparently there is another way to prove Conjecture~\ref{con:pre} similar to proofs of \cite[Lemma 11.4]{Hu69} and \cite[Lemma 1]{Hu70}. This way is unlikely to be simpler because it is based on another definition of general position, which entails its own technical difficulties.

%\appendix
\section{Appendix}

\begin{proof}[Proof of the Singular Borromean Rings Lemma~\ref{l:SBR}]
(This proof repeats the proof of \cite[Singular Borromean Rings Lemma 2.4]{AMSW} with minor changes).
%(Here we use the standard definitions of a chain and a cycle in $\mathbb{R}^d$)

Assume to the contrary that the map $f$ exists.
Without loss of generality, we may assume that $f$ is in general position.

%Throughout the proof all the chains and cycles are assumed to have $\mathbb{Z}_2$ coefficients, and all the equalities are congruences modulo 2.
%Since all the chains below are represented by general position polyhedra, chains could be identified with their supporting bodies.
We denote by $\partial$ the boundary of a chain.

We can view $f(T)$, $f(S^n_p)$, and $f(S^n_m)$ as $2l$-, $n$- and $n$-dimensional cycles in general position in $\mathbb{R}^{n+l+1}$.
Denote by $C_T$, $C_p$, and $C_m$ singular cones in general position over $f(T)$, $f(S^n_p)$, and $f(S^n_m)$, respectively.
We view these cones as $(2l+1)$-, $(n+1)$- and $(n+1)$-dimensional chains.
The contradiction is
$$0\underset{(1)}=|\partial(C_T \cap C_p \cap C_m)|\ \underset{(2)}=
\ |\underbrace{\partial C_T}_{=f(T)} \cap C_p \cap C_m|\ +
\ |C_T \cap \underbrace{\partial C_p}_{=f(S^n_p)}\cap C_m|\ +
\ |C_T \cap  C_p \cap \underbrace{\partial C_m}_{=f(S^n_m)}|\ \underset{(3)}=1+0+0=1.$$
Here (1) follows because $C_T \cap C_p \cap C_m$ is a $1$-dimensional chain, so its boundary is a set of an even number of points.
%We can write this boundary as the sum of three terms
%To reach a contradiction, we show that the second and the third term are $0$, while the first one is $1$.
Equation (2) is Leibniz formula.
So it remains to prove (3).
\phantom\qedhere
%\noqedsymbol
\end{proof}

\begin{proof}[Proof of (3)]
For $X\in \{T,S^n_m,S^n_p\}$ denote $f_X:=f|_X$.

For the {\it second term} we have
$$|C_T \cap f(S^n_p) \cap C_m|
%\ (f(\Sigma_p) \iprod C_T)\iprod (f(\Sigma_p)\iprod C_m)
\ \overset{(*)}=\ |(f_{S^n_p}^{-1}C_T)\cap(f_{S^n_p}^{-1}C_m)|\ \overset{(**)}=\ 0,\quad\text{where}$$
\begin{itemize}
\item[(*)] holds because $(n+1)+(2l+1)+2n<3(n+l+1)$, so by general position $C_T\cap C_m$ avoids self-intersection points of $f(S^n_p)$,

\item[(**)] holds by the well-known higher-dimensional analogue of \cite[Parity Lemma 3.2.c]{Sk14} (which is proved analogously) because the intersecting objects are general position cycles in $S^n_p$; they are cycles by Conjecture~\ref{con:pre} and because  $f(S_p^n) \cap \partial (C_T) = f(S_p^n) \cap f(T) = \varnothing = f(S_p^n) \cap f(S_m^n) = f(S_p^n) \cap \partial (C_m)$.
\end{itemize}
Analogously $|C_T \cap  C_p \cap f(S^n_m)|=0$.

For the {\it first term} we have
$$|f(T) \cap C_p \cap C_m|
%(f(T) \iprod C_p ) \iprod  (f(T) \iprod C_m)
\ \overset{(***)}=\ |(f_T^{-1}C_p)\cap (f_T^{-1}C_m)|\ \overset{(****)}=\ m\cap p\ =\ 1,\quad\text{where}$$
\begin{itemize}
\item[(***)] holds because $n\ge l\Leftrightarrow 2(n+1)+4l<3(n+l+1)$, so by general position $C_p\cap C_m$ avoids self-intersection points of $f(T)$,

\item[(****)] is proved as follows:

By Conjecture~\ref{con:pre} and $f(T) \cap \partial (C_p) = f(T) \cap f(S_p^n) = \varnothing$, it follows that the $l$-chain $f_T^{-1}C_p$ is a cycle in $T$.
By conditions (b) and (c) of Lemma~\ref{l:SBR} we have
$$|p\cap f_T^{-1}C_p|\ =\ |f(p)\cap C_p|\ =\ 1 \text{\quad and \quad}
|m\cap f_T^{-1}C_p|\ =\ |f(m)\cap C_p|\ =\ 0.$$
I.e. the cycle $f_T^{-1}C_p$  intersects the parallel $p$ and the meridian $m$ at $1$ and $0$ points modulo $2$, respectively.
Therefore $f_T^{-1}C_p$ is homologous to the meridian $m$.
%, i.e., $(f|_T)^{-1}C_p=m$.
Likewise, $f_T^{-1}C_m$ is homologous to the parallel $p$.
This implies (****).
%$(f|_T)^{-1}C_m=p$.
\end{itemize}
\end{proof}

\bibliographystyle{alpha}
\bibliography{ae}
%\newpage

\end{document}